\journal{Statistics and Probability Letters}
\def\<{\langle}
\def\>{\rangle}
\def\P{\mathbb P}
\def\E{\mathbb E}
\def\F{\mathcal F}
\def\eps{\epsilon}
\def\0{\underline 0}
\def\1{\underline 1}
\def\var{\mathbb{V}\mathrm{ar}}
\def\cov{\mathbb{C}\mathrm{ov}}
\def\la{\lambda}
\newcommand{\bel}{\begin{equation}\label}
\newcommand{\ee}{\end{equation}}
\def\tfrac{\frac}
      \newtheorem{theorem}{Theorem}[section]
       \newtheorem{cor}[theorem]{Corollary}
       \newtheorem{remark}[theorem]{Remark}
\theoremstyle{definition}
\newtheorem{example}{Example}[section]
\begin{document}

\begin{frontmatter}



\title{Double asymptotics for the chi-square statistic}

 \author[osu]{Grzegorz A. Rempa{\l}a}
  \author[pw]{Jacek Weso{\l}owski}
 \address[osu]{Division of Biostatistics and Mathematical Biosciences Institute, The Ohio State University, 43210 Columbus, OH USA }
\address[pw]{Wydzia{\l} Matematyki i Nauk Informacyjnych,  Politechnika Warszawska, Warsaw, Poland}


\begin{abstract}
We consider distributional limit of the Pearson chi-square statistic when the number of classes $m_n$ increases with the sample size $n$ and $n/\sqrt{m_n}\to\la$. Under  mild moment conditions, the limit is Gaussian for $\la=\infty$,  Poisson for finite  $\la>0$, and degenerate for $\la=0$.

\end{abstract}

\begin{keyword} Pearson chi-square statistic, central limit theorem, Poisson limit theorem, weak convergence.


\end{keyword}

\end{frontmatter}



\section{Preliminaries} The Pearson chi-square statistic is probably one of the best-known and most important objects of  statistical science and has played a major  role in statistical applications ever since its first appearance in Karl Pearson's work on ``randomness testing"  \citep{pearson1900x}. The
standard test for goodness-of-fit  with the  Pearson chi-square statistic   tacitly assumes that  the support of the discrete distribution of interest is  fixed (whether finite or not) and  unaffected by the sampling process.  However,  this assumption may be unrealistic for   modern  'big-data' problems  which   involve  complex, adaptive  data acquisition processes (see, e.g., \citealt{grotzinger2014habitable} for an  example in astro-biology).  In many such cases the associated   statistical testing problems  may be   more accurately  described in terms    of  triangular arrays of    discrete distributions  whose   finite supports  are dependent upon the collected samples and increase with the samples' size \citep{Pietrzak2016}.   Motivated by 'big-data' applications,  in this note we  establish some asymptotic results for  the Pearson chi-square statistic for  triangular arrays of discrete random variables  for which their  number of  classes $m_n$  grows with the sample size $n$. Specifically,
let $X_{n,k}$, $k=1,\ldots,n$, be iid random variables having the same distribution as $X_n$, where
$$
\P(X_n=i)=p_n(i)>0,\qquad i=1,2,\ldots,m_n<\infty,\qquad n=1,2,\ldots
$$
Recall that the standard Pearson chi-square statistic is defined as
\bel{stachi2}
\chi^2_n=n\sum_{i=1}^{m_n}\,\tfrac{\left(\hat{p}_n(i)-p_n(i)\right)^2}{p_n(i)},
\ee
where the empirical frequencies $\hat{p}_n(i) $ are
$$
\hat{p}_n(i)=n^{-1}\sum_{k=1}^n\,I(X_{n,k}=i),\quad i=1,\ldots,m_n.
$$
As stated above, in what follows  we will be interested in the {\em double asymptotic} analysis   of the weak limit of $\chi^2_n$, that is,  the case when  $m_n\to\infty$ as  $n\to\infty$.

Observe that
 $\chi_n^2$ given in \eqref{stachi2} can be decomposed into a sum of two uncorrelated components as  follows
\bel{deco}
\chi_n^2=n^{-1}\left(U_n+S_n\right)-n,
\ee
where
\bel{U_n}
U_n=\sum_{1\le k\ne l\le n}\,\tfrac{I(X_{n,k}=X_{n,l})}{p_n(X_{n,k})}
\ee
and
\bel{S_n}
S_n=\sum_{k=1}^n\,\tfrac{1}{p_n(X_{n,k})}=\sum_{k=1}^n p_n^{-1}(X_{n,k}).
\ee  The second equality  above introduces notational convention we use throughout.
Note that for fixed $n$ the statistic $S_n$ is simply a sum of iid random variables and $U_n$ is an unnormalized $U$-statistic \citep[see, e.g.,][]{korolyuk2013theory}.  It is routine to check that $$\E\,U_n=n(n-1)\quad\mbox{ and }\quad \E\,S_n=nm_n$$ and consequently  $$\E\,\chi_n^2=m_n-1.$$
Moreover, since we also have  $\cov(U_n,S_n)=0$, it follows that
 $$
\var\,\chi_n^2=n^{-2}(\var\,S_n+\var\,U_n) =n^{-1}[{\var\,{p_n^{-1}(X_n)}+2(n-1)(m_n-1)}].
$$
When $m_n=m$ is a constant then the classical result  \citep[see, e.g.,][chapter 6]{Shao03} implies that   the statistic $\chi_n^2$  asymptotically follows  the $\chi^2$-distribution  with $(m-1)$ degrees of freedom.  Consequently,    when $m$ is large  the standardized statistic  $(\chi^2_n-(m-1))/\sqrt{2(m-1)}$ may be  approximated by the standard normal distribution. However, in  the case when $m_n\to \infty$ as $n\to \infty$ the matters appear to be more subtle  and the above normal approximation may or may not be valid depending upon the asymptotic relation of $m_n$ and $n$, as  described below.   Since  $S_n$ is a sum of iid random variables, the case when $S_n$ contributes to the limit of normalized $\chi^2_n$ may be largely  handled with the standard theory for arrays of iid variables. Consequently, we focus  here on  a seemingly   more interesting case
when the asymptotic influence of $U_n$ dominates over that of $S_n$.
Specifically, throughout the paper we  assume  that as $n,m_n\to \infty$
$$ \leqno(\text{C})\hspace{1.7in} (m_nn)^{-1} \var\,{p_n^{-1}(X_n)}\to 0.$$ Note that (C) implies
$  n^{-1} (S_n-nm_n)/\sqrt{2m_n} \to 0$ in probability
and, in particular, is trivially satisfied when $X_n$ is a uniform random variable on the integer lattice $1,\ldots,m_n$, that is,  when $p_n(i)=m^{-1}_n $ for $i=1\ldots, m_n$. Under condition (C)  we get a rather complete picture of the limiting behavior of $\chi_n^2$. Our main results are presented in  Section 2 where we discuss the  Poissonian and Gaussian asymptotics.  Some examples, relations to asymptotics known in the literature and further discussions are provided in Section 3. The basic tools used in our derivations are listed in the appendix. In   what follows limits are taken as  $n\to \infty$  with $m_n\to \infty$ and $\stackrel{d}\to$ stands for convergence in distribution.

\section{Poissonian and Gaussian asymptotics}
We start with the case when a naive normal approximation for the standardized $\chi^2_n$ statistic fails. Indeed,   as it turns out,  when $m_n$ is asymptotically of order $n^2$,  we have the following Poisson limit theorem for $\chi^2_n$.  \begin{theorem}\label{tha}
Assume that the condition (C) holds,  as well as
\bel{rez}
\tfrac{n}{\sqrt{m_n}}\to\lambda\in(0,\infty).
\ee
Then
\bel{limiP}
\tfrac{\chi_n^2-m_n}{\sqrt{2 m_n}}\stackrel{d}{\to}\tfrac{\sqrt{2}}{\lambda}Z-\tfrac{\lambda}{\sqrt{2}},\qquad Z\sim\mathrm{Pois}\left(\tfrac{\lambda^2}{2}\right)
\ee
\end{theorem}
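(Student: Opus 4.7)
The plan is to separate the contributions of $U_n$ and $S_n$ in the decomposition \eqref{deco}, and then establish a Poisson limit for the normalized $U$-statistic $U_n/(2m_n)$ via the method of moments. Writing $\chi_n^2 - m_n = n^{-1}(U_n - n(n-1)) + n^{-1}(S_n - nm_n) - 1$ and dividing by $\sqrt{2m_n}$, the last term trivially vanishes and the $S_n$-term has mean zero with variance $(2 m_n n)^{-1}\var\,p_n^{-1}(X_n) \to 0$ by (C), so by Chebyshev it is $o_p(1)$. Since by \eqref{rez} we have $\sqrt{2m_n}/n\to \sqrt{2}/\lambda$ and $(n-1)/\sqrt{2m_n}\to \lambda/\sqrt{2}$, Slutsky's theorem reduces the problem to establishing the Poisson limit
\begin{equation*}
\frac{U_n}{2m_n}\;\stackrel{d}{\to}\;Z\;\sim\;\mathrm{Pois}(\lambda^2/2).
\end{equation*}

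I would prove this convergence by the method of moments, since the Poisson law is determined by its moments. Writing $U_n = \sum_{k\neq l} Y_{k,l}$ with $Y_{k,l} = I(X_{n,k}=X_{n,l})/p_n(X_{n,k})$ and expanding $U_n^r$, each $r$-tuple of ordered pairs contributes an expectation that factorises over the connected components $V_1,\ldots,V_s$ of the associated ``collision multigraph'' (distinct indices as vertices, each pair as an edge): with $v_j, e_j$ the vertex and edge counts of $V_j$,
\begin{equation*}
\E\Bigl[\prod_{t=1}^r Y_{k_t,l_t}\Bigr] \;=\; \prod_{j=1}^s\sum_{x=1}^{m_n} p_n(x)^{v_j-e_j}.
\end{equation*}
A dimension count using \eqref{rez} shows that after dividing by $(2m_n)^r$, only configurations in which every component has $v_j = 2$ vertices (with possibly multiple parallel edges) survive in the limit, the remainder being suppressed by insufficient $n^p$-factors. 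For these surviving configurations a standard combinatorial count yields $n^{(2s)}\cdot S(r,s)\cdot 2^{r-s}$ tuples (with $S(r,s)$ the Stirling number of the second kind) and the product of component factors satisfies $\prod_j \sum_x p_n(x)^{2-e_j}=m_n^{r-s}(1+o(1))$, so that, using $n^{(2s)}/(2m_n)^s\to (\lambda^2/2)^s$,
\begin{equation*}
\E\bigl[(U_n/(2m_n))^r\bigr]\;\longrightarrow\; \sum_{s=1}^r S(r,s)\,(\lambda^2/2)^s \;=\; \E[Z^r],
\end{equation*}
and the Poisson convergence follows from the moment convergence theorem.

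The main obstacle is justifying $\sum_x p_n(x)^{-k}\sim m_n^{k+1}$ when $k=e_j-2\geq 2$: the lower bound is immediate from Jensen's inequality, but (C) directly controls only the case $k=1$ (through $\sum 1/p_n = m_n^2(1+o(1))$). I would handle this by a preliminary truncation to the set $I_n = \{i:c_1/m_n\le p_n(i)\le c_2/m_n\}$ for constants $0<c_1<1<c_2$ eventually sent to $1$. Chebyshev applied to $p_n^{-1}(X_n)$ together with (C) yields $\sum_{i\notin I_n}p_n(i) = o(1/\sqrt{m_n})$, making the truncation error in $U_n/(2m_n)$ negligible in $L^1$, while on $I_n$ the inverse moments are trivially bounded by $c_1^{-k}m_n^{k+1}$; combining these bounds with the Jensen lower bound and sending $c_1,c_2\to 1$ gives the required $(1+o(1))$ asymptotics and closes the argument.
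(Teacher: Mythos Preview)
Your reduction via (C) and Slutsky is the same as the paper's, but from there the routes diverge. The paper writes $U_n/(2m_n)=\sum_{k}A_{n,k}$ with $A_{n,k}=m_n^{-1}\sum_{j<k}I(X_{n,j}=X_{n,k})/p_n(X_{n,j})$ and applies the Be\'ska--K{\l}opotowski--S{\l}omi\'nski conditional Poisson limit theorem to the array $(A_{n,k})$. The point is that BKS only requires checking conditions through the \emph{third} moment of $A_{n,k}$, and this brings in only $\E\,p_n^{-2}(X_n)$, which is exactly what (C) controls. Your method of moments, by contrast, needs $\sum_x p_n(x)^{-k}\sim m_n^{k+1}$ for every $k$, and you are right that (C) does not give this: for instance $p_n(1)=m_n^{-\alpha}$ with $1<\alpha<3/2$ and the remaining mass spread uniformly satisfies (C) and \eqref{rez}, yet $\E[(U_n/(2m_n))^r]\to\infty$ for $r$ large. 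So truncation is essential, not merely convenient, and must be applied \emph{before} the dimension count that kills the $v_j\ge 3$ components, not just for the surviving terms.

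The truncation idea is sound, but the phrase ``sending $c_1,c_2\to 1$'' hides a genuine issue: for fixed $c_1<1<c_2$ the moments of $U_n'/(2m_n)$ are only squeezed between $c_1$- and $c_2$-dependent bounds and need not converge. You must either (i) let $c_{1,n},c_{2,n}\to 1$ diagonally---choosing the gap to shrink slowly enough that Chebyshev with (C) still gives $\sum_{i\notin I_n}p_n(i)=o(m_n^{-1/2})$, after which $\sum_{x\in I_n}p_n(x)^{2-e}=m_n^{e-1}(1+o(1))$ holds outright---or (ii) argue tightness of $U_n/(2m_n)$, pass to subsequential limits, and use uniform integrability of the truncated moments to pin every subsequential limit to $\mathrm{Pois}(\lambda^2/2)$. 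Either route closes the argument, but neither is the one-line step your sketch suggests. The payoff of your approach is that it avoids citing a specialised limit theorem and makes the collision-graph combinatorics explicit; the cost is precisely this extra layer of truncation analysis that the paper's BKS route sidesteps entirely.
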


\begin{proof} 
Due to (C) it suffices to consider the asymptotics of $U_n$ alone. We write
\bel{statist}
\tfrac{U_n-n(n-1)}{n\sqrt{2m_n}}=\tfrac{\sqrt{2m_n}}{n}\,\sum_{k=1}^n\,A_{n,k}-\tfrac{n-1}{\sqrt{2m_n}},
\ee
where $A_{n,1}=0$ and for $k=2,\ldots,n$
\bel{ank}
A_{n,k}=m^{-1}_n\sum_{j=1}^{k-1}\,\tfrac{I(X_{n,j}=X_{n,k})}{p_n(X_{n,j})}={m_n^{-1}p_n^{-1}(X_{n,k})}\sum_{j=1}^{k-1}\,I(X_{n,j}=X_{n,k}).
\ee
The above representation implies that to prove  \eqref{limiP} we need only to show that $\sum_{k=1}^n\,A_{n,k}\stackrel{d}{\to}\mathrm{Pois}\left(\tfrac{\lambda^2}{2}\right)$. To this end we will verify the conditions of  Theorem \ref{BKS} in the appendix,  due to Be\'ska, K\l opotowski and S\l omi\'nski
\citep{beska1982limit}. Denote  $\mathcal{F}_{n,0}=\{\emptyset,\,\Omega\}$ and $\mathcal{F}_{n,k}=\sigma(X_{n,1},\ldots,X_{n,k})$, $k=1,\ldots,n$. Then using the first form of $A_{n,k}$ from \eqref{ank} we see that
\begin{align*}
\max_{1\le k\le n}\,\E(A_{n,k}|\mathcal{F}_{n,k-1})& ={m^{-1}_n}\max_{1\le k\le n}\,\sum_{j=1}^{k-1}\,\E\,\left(\left.\tfrac{I(X_{n,j}=X_{n,k})}{p_n(X_{n,j})}\right|\mathcal{F}_{n,k-1}\right) \\ &=\max_{1\le k\le n}\,\tfrac{k-1}{m_n}=\tfrac{n-1}{m_n}\to 0
\end{align*}
due to \eqref{rez} and thus \eqref{BKS1} holds. Similarly,
\bel{kin}
\sum_{k=1}^n\,\E(A_{n,k}|\mathcal{F}_{n,k-1})=\sum_{k=1}^n\,\tfrac{k-1}{m_n}=\tfrac{n(n-1)}{2m_n}\to \tfrac{\lambda^2}{2}
\ee
and thus \eqref{BKS2} also follows with $\eta=\tfrac{\lambda^2}{2}$. Since $A_{n,k}\ge 0$ the required convergence  in \eqref{BKS3} (for any $\eps>0$) will follow from  convergence of the unconditional moments
\bel{bnd}
\sum_{k=1}^n\,\E\,A_{n,k}I(|A_{n,k}-1|>\eps)\le \eps^{-2}\,\sum_{k=1}^n\,\left(\E\,A_{n,k}^3-2\E\,A_{n,k}^2+\E\,A_{n,k}\right).
\ee

Using the second form of $A_{n,k}$ from \eqref{ank} we see that the conditional distribution of $m_n\,p_n(X_{n,k})\,A_{n,k}$ given $X_{n,k}$ follows a binomial distribution $\mathrm{Binom}(k-1,\,p_n(X_{n,k}))$. Since for $M\sim \mathrm{Binom}(r,p)$ we have $\E\,M=rp$, $\E\,M^2=rp+r(r-1)p^2$ and $\E\,M^3=rp+3r(r-1)p^2+r(r-1)(r-2)p^3$,
we thus obtain
$$
\sum_{k=1}^n\,\E\,A_{n,k}=\tfrac{1}{m_n}\sum_{k=1}^n (k-1)\simeq \tfrac{n^2}{2m_n}\to \tfrac{\lambda^2}{2},
$$
$$
\sum_{k=1}^n\,\E\,A^2_{n,k}=\tfrac{1}{m^2_n}\sum_{k=1}^n \left((k-1)m_n+(k-1)(k-2)\right)\simeq\tfrac{n^2}{2m_n}+\tfrac{n^3}{3m_n^2}\to \tfrac{\lambda^2}{2}.
$$
Similarly,
\begin{align*}
\sum_{k=1}^n\,\E\,A^3_{n,k} &=\tfrac{1}{m^3_n}\sum_{k=1}^n \left((k-1)\E\,{p^{-2}_n(X_n)}+3(k-1)(k-2)m_n+(k-1)(k-2)(k-3)\right)\\
&\simeq \tfrac{n^2}{2m_n^3}\E\,{p^{-2}_n(X_n)}+\tfrac{n^3}{m_n^3}+\tfrac{n^4}{4m_n^3}.
\end{align*}
Note that (C) and \eqref{rez} imply $m_n^{-2}\,\E\,{p_n^{-2}(X_n)}\to 1$ and therefore 
$$
\sum_{k=1}^n\,\E\,A^3_{n,k}\simeq \tfrac{n^2}{2m_n^3}\E\,\tfrac{1}{p^2_n(X_n)}\to \tfrac{\lambda^2}{2}.
$$
Combining the limits of the last three expressions we conclude that the right-hand side of \eqref{bnd} tends to zero and hence \eqref{BKS3} of  Theorem~\ref{BKS} is  also satisfied. The result follows.
\end{proof}

Let us now  consider   the case $\tfrac{n}{\sqrt{m_n}}\to \infty$. As it turns out,  under this condition  the statistic $\chi^2_n$ is asymptotically Gaussian.

\begin{theorem}\label{thb}
Assume that condition (C) is satisfied  and that there exists $\delta>0$ such that
\bel{novnd}
\sup_n \, {m_n^{-(1+\delta)}}\,\E\,{p_n^{-(1+\delta)}(X_n)}<\infty
\ee
as  well as
\bel{reff}
\tfrac{n}{\sqrt{m_n}}\to \infty.
\ee

Then
\bel{limi}
\tfrac{\chi_n^2-m_n}{\sqrt{2m_n}}\stackrel{d}{\to}
N,\quad N\sim \mathrm{Norm}(0,1).
\ee
\end{theorem}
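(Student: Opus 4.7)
The plan is to parallel the proof of Theorem~\ref{tha}: use (C) to dispose of the $S_n$ contribution, recast $U_n$ as a martingale transform, and invoke a martingale CLT~--- this time one yielding a Gaussian rather than a Poisson limit. By (C) and Chebyshev's inequality, $(S_n-nm_n)/(n\sqrt{2m_n})\to 0$ in probability, so by Slutsky it suffices to prove $W_n:=(U_n-n(n-1))/(n\sqrt{2m_n})\stackrel{d}{\to} N$. Since $\sum_{k=1}^n\E(A_{n,k}|\F_{n,k-1})=\sum_{k=1}^n (k-1)/m_n=n(n-1)/(2m_n)$, the representation \eqref{statist} can be rewritten as
$$
W_n=\tfrac{\sqrt{2m_n}}{n}\sum_{k=1}^n D_{n,k},\qquad D_{n,k}:=A_{n,k}-\tfrac{k-1}{m_n},
$$
so that $W_n$ is a normalized sum of martingale differences with respect to $\{\F_{n,k}\}$.

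I would then apply a standard martingale CLT. With $\xi_{n,k}=(\sqrt{2m_n}/n)D_{n,k}$, two ingredients must be checked: (i) the quadratic variation $\sum_k\E(\xi_{n,k}^2|\F_{n,k-1})\to 1$ in probability, and (ii) a Lindeberg-type condition. For (i), the conditional binomial representation used in the proof of Theorem~\ref{tha} yields
$$
V_k:=\var(A_{n,k}|\F_{n,k-1})=m_n^{-2}\Bigl[\sum_{j=1}^{k-1}p_n^{-1}(X_{n,j})+\sum_{\substack{j_1\ne j_2\\ j_1,j_2\le k-1}}\tfrac{I(X_{n,j_1}=X_{n,j_2})}{p_n(X_{n,j_1})}-(k-1)^2\Bigr].
$$
A routine computation gives $\sum_k\E V_k=n(n-1)(m_n-1)/(2m_n^2)$, so $(2m_n/n^2)\sum_k\E V_k\to 1$; it then suffices to show $\var(\sum_k V_k)=o(n^4/m_n^2)$. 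I would do this by splitting $\sum_k V_k$ into a sum-of-iid piece contributed by the $p_n^{-1}(X_{n,j})$ terms (which is controlled directly by (C)) and a degenerate $U$-statistic piece contributed by the coincidence indicators, whose variance reduces, after enumeration of the coincidence patterns among the index quadruples, to bounds on $\E p_n^{-r}(X_n)$ for $r\le 1+\delta$ that follow from \eqref{novnd} via Hölder's inequality.

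For (ii), I would verify a Lyapunov condition with exponent $2+\delta$: applying a von Bahr-Esseen / Rosenthal-type moment bound to the centred binomial $m_n p_n(X_{n,k})D_{n,k}\mid X_{n,k}$ and combining it with \eqref{novnd} and Hölder, one shows $\sum_k\E|\xi_{n,k}|^{2+\delta}=o(1)$ under \eqref{reff}. The main obstacle is expected to be step (i), since the $U$-statistic part of $V_k$ demands careful fourth-order bookkeeping of index coincidences and a controlled use of the $(1+\delta)$-moment bound; the remaining ingredients are essentially routine.
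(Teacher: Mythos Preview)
Your proposal is correct and follows essentially the same route as the paper: reduce via (C), write the normalized $U_n$ as a sum of martingale differences (the paper's $Y_{n,k}$ is just your $\xi_{n,k}$ up to the asymptotically equivalent normalization $n\sqrt{2m_n}\sim\sqrt{2n(n-1)(m_n-1)}$), and verify the two conditions of the Lyapunov martingale CLT. The decomposition of $\E(\xi_{n,k}^2|\F_{n,k-1})$ into an iid piece and a coincidence-indicator piece, and the treatment of the Lyapunov condition via a conditional Rosenthal bound combined with \eqref{novnd} and \eqref{reff}, are exactly what the paper does.

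Two minor remarks on the details. First, the part you flag as the ``main obstacle'' is in fact easier than you anticipate: the summands $\tfrac{I(X_{n,i}=X_{n,j})}{p_n(X_{n,i})}-1$ for distinct pairs $\{i,j\}$ are pairwise \emph{uncorrelated} (condition on the variable whose index appears in only one of the two pairs), so the second moment of the $U$-statistic piece reduces to a diagonal sum and is bounded by $C m_n n^4$, giving the required $O(m_n^{-1})$ after normalization---no index-quadruple enumeration or $(1{+}\delta)$-moment input is needed here. Second, for the iid piece the paper instead bounds the $(1{+}\delta)$-th moment via the Marcinkiewicz--Zygmund/von Bahr--Esseen inequality together with \eqref{novnd}; your second-moment argument using (C) alone also works and is slightly more economical.
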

\begin{remark} Note that under  (C)  the  conditions \eqref{novnd} (with $\delta=1$) and \eqref{reff} are implied by the  condition $n/m_n\to\la\in(0,\infty)$.
\end{remark}
\begin{proof}
As in Theorem~\ref{tha}, under our assumption (C)  it suffices to show convergence in distribution to $N\sim \mathrm{Norm}(0,1)$ of the normalized $U_n$ variable
$$
\tfrac{U_n-n(n-1)}{\sqrt{n(n-1)2(m_n-1)}}=\sum_{k=1}^n\,Y_{n,k},
$$
where
\bel{repr}
Y_{n,k}=\tfrac{\sqrt{2}}{\sqrt{n(n-1)(m_n-1)}}\,\sum_{j=1}^{k-1}\,\left(\tfrac{I(X_{n,j}=X_{n,k})}{p_n(X_{n,j})}-1\right)=\tfrac{\sqrt{2}\,B_{n,k}}{\sqrt{n(n-1)(m_n-1)}}
\ee and the last equality defines  $B_{n,k}$.
Since $\E(I(X_{n,k}=X_{n,j})|\F_{n,k-1})=p_n(X_{n,j})$ for any $j=1,\ldots,k-1$, it follows that $\E(Y_{n,k}|\F_{n,k-1})=0$. Consequently, $(Y_{n,k},\,\F_{n,k})_{k=1,\ldots,n}$ are martingale differences. Therefore, to prove \eqref{limi} we may use the Lyapounov version of the CLT  for martingale differences  (see Theorem~\ref{mclt} in the appendix).

Due to \eqref{repr} we have
\begin{align*}
\E(B_{n,k}^2|\F_{n,k-1}) &=\sum_{j=1}^{k-1}\,\tfrac{\var(I(X_n=X_{n,j})|\F_{n,k-1})}{p_n^2(X_{n,j})} \\
&+\sum_{1\le i\ne j\le k-1}\,\tfrac{\cov(I(X_n=X_{n,i}),\,I(X_n=X_{n,j})|\F_{n,k-1})}{p_n(X_{n,i})p_n(X_{n,j})}.
\end{align*}
Since
$
\var(I(X_n=X_{n,j})|\F_{n,k-1})=p_n(X_{n,j})(1-p_n(X_{n,j}))
$
and
$$
\cov(I(X_n=X_{n,i}),\,I(X_n=X_{n,j})|\F_{n,k-1})=
I(X_{n,i}=X_{n,j})p_n(X_{n,i})-p_n(X_{n,i})p_n(X_{n,j})
$$
we obtain
$$
\E(B_{n,k}^2|\F_{n,k-1})=\sum_{j=1}^{k-1}\,\left({p^{-1}_n(X_{n,j})}-1\right)+\sum_{1\le i\ne j\le k-1}\,\left(\tfrac{I(X_{n,i}=X_{n,j})}{p_n(X_{n,i})}-1\right).
$$
Consequently, \eqref{convar} is equivalent to
\bel{equi}
\tfrac{\sum_{k=1}^n\,\sum_{j=1}^{k-1}\,\left({p^{-1}_n(X_{n,j})}-m_n\right)}{\tfrac{n(n-1)}{2}(m_n-1)}+\tfrac{\sum_{k=1}^n\,\sum_{1\le i\ne j\le k-1}\,\left(\tfrac{I(X_{n,i}=X_{n,j})}{p_n(X_{n,i})}-1\right)}{\tfrac{n(n-1)}{2}(m_n-1)}\stackrel{\P}{\to}0.
\ee
To show the above,  we separately consider moments of the  summands on  the left-hand side of \eqref{equi}.
For the first one, note that
\begin{align*}
\sum_{k=1}^n\,\sum_{j=1}^{k-1}\left({p^{-1}_n(X_{n,j})}-m_n\right) &=\sum_{j=1}^{n-1}\,(n-j)\left({p^{-1}_n(X_{n,j})}-m_n\right)\\
&\stackrel{d}{=}\sum_{j=1}^{n-1}\,j\left({p^{-1}_n(X_{n,j})}-m_n\right)
\end{align*} where the last equality denotes the distributional equality of random variables.
Therefore, using  inequality \eqref{bur1} given in the appendix,  we get (possibly with different  universal constants $C$ from line  to line)
\begin{align*}
& \E\left|\tfrac{\sum_{k=1}^n\,\sum_{j=1}^{k-1}\left({p^{-1}_n(X_{n,j})}-m_n\right)}{\tfrac{n(n-1)}{2}(m_n-1)}\right|^{1+\delta}
\le C \tfrac{\E\,\left|\sum_{j=1}^{n-1}\,j\left({p^{-1}_n(X_{n,j})}-m_n\right)\right|^{1+\delta}}{n^{2+2\delta}m_n^{1+\delta}}\\
&\le C\tfrac{\E\,\left|{p^{-1}_n(X_{n,j})}-m_n\right|^{1+\delta}\,n^{\tfrac{\delta-1}{2}\vee 0}\,\sum_{j=1}^{n-1}\,j^{1+\delta}}{n^{2+2\delta}m_n^{1+\delta}} \\ & \le C\tfrac{\E\,\left|{p^{-1}_n(X_{n,j})}-m_n\right|^{1+\delta}\,n^{\tfrac{3(1+\delta)}{2}\vee(2+\delta)}}{n^{2+2\delta}m^{1+\delta}_n}  \le C\tfrac{\E\,\left|{p^{-1}_n(X_{n,j})}-m_n\right|^{1+\delta}}{n^{\tfrac{1+\delta}{2}\wedge \delta}m_n^{1+\delta}}.
\end{align*}
In view of  this and the elementary inequality  $|a+b|^p\le C(|a|^p+|b|^p)$ valid for any $p>0$ and any real $a,b$  we have for some constants $C_1,C_2$
$$
\E\left|\tfrac{\sum_{k=1}^n\,\sum_{j=1}^{k-1}\left({p^{-1}_n(X_{n,j})}-m_n\right)}{\tfrac{n(n-1)}{2}(m_n-1)}\right|^{1+\delta}
\le \tfrac{C_1}{n^{\tfrac{1+\delta}{2}\wedge \delta}}\,\tfrac{\E\,p_n^{-(1+\delta)}(X_n)}{m_n^{1+\delta}}+\tfrac{C_2}{n^{\tfrac{1+\delta}{2}\wedge \delta}}\to 0.
$$
For the numerator of the second part on the left hand side of \eqref{equi} we may write
$$
\sum_{k=1}^n\,\sum_{1\le i\ne j\le k-1}\,\left(\tfrac{I(X_{n,i}=X_{n,j})}{p_n(X_{n,i})}-1\right)=2\sum_{1\le i< j\le n-1}\,(n-j)\left(\tfrac{I(X_{n,i}=X_{n,j})}{p_n(X_{n,i})}-1\right).
$$
Moreover,
\begin{align*}
&\E\left(\sum_{1\le i< j\le n-1}\,(n-j)\left(\tfrac{I(X_{n,i} =X_{n,j})}{p_n(X_{n,i})}-1\right)\right)^2\\ &=\sum_{1\le i<j\le n-1}\,(n-j)^2\,\E\left(\tfrac{I(X_{n,i}=X_{n,j})}{p_n(X_{n,i})}-1\right)^2,
\end{align*}
since  the expectations of the other terms resulting from squaring the  large-bracketed first expression above  are equal  to zero. Consequently
\begin{align*}
\E\left(\sum_{1\le i< j\le n-1}\,(n-j)\left(\tfrac{I(X_{n,i}=X_{n,j})}{p_n(X_{n,i})}-1\right)\right)^2 & =(m_n-1)\sum_{1\le i<j\le n-1}\,(n-j)^2\\
&\le C\,m_n n^4
\end{align*}
and thus for the squared expectation of the  second term in \eqref{equi} we get
$$
\E\,\left(\tfrac{\sum_{k=1}^n\,\sum_{1\le i\ne j\le k-1}\,\left(\tfrac{I(X_{n,i}=X_{n,j})}{p_n(X_{n,i})}-1\right)}{\tfrac{n(n-1)}{2}(m_n-1)}\right)^2\le C\, m^{-1}_n\to 0. $$

Note that here we used the fact that $m_n\to \infty$.
 To finish the proof we only need to show \eqref{4m}.
Again we will rely on the representation of $Y_{n,k}$ given in \eqref{repr}. Note that
\begin{align*}
&\E\,\left|Y_{n,k}\right|^{2+\delta}\\ &
\le C n^{-(2+\delta)}m_n^{-(1+\tfrac{\delta}{2})}\,\E\,\left({p_n^{-(2+\delta)}(X_{n,k})}
\,\left|\sum_{j=1}^{k-1}\,(I(X_{n,j}=X_{n,k})-p_n(X_{n,k}))\right|^{2+\delta}\right).
\end{align*}
Since $I(X_{n,j}=X_{n,k})-p_n(X_{n,k})$, $j=1,\ldots,k-1$, are conditionally iid given $X_{n,k}$ and
$$\E((I(X_{n,j}=X_{n,k})-p_n(X_{n,k}))|X_{n,k})=0$$
then by conditioning with respect to $X_{n,k}$ and applying  Rosenthal's inequality (see  \eqref{ros} in the appendix) to the  conditional moment of the sum we obtain
\begin{align}\label{niero}
&\sum_{k=1}^n\,\E\,\left|Y_{n,k}\right|^{2+\delta}\nonumber \\ &\le \tfrac{C}{n^{2+\delta}m_n^{1+\tfrac{\delta}{2}}}\,\sum_{k=1}^n\,\E\,\left({p_n^{-(2+\delta)}(X_n)}\left((k-1)p_n(X_n)+[(k-1)p_n(X_n)]^{1+\tfrac{\delta}{2}}\right)\right)\nonumber \\
&\le C\left(n^{-\delta}m_n^{-(1+\tfrac{\delta}{2})}\,\E\,{p_n^{-(1+\delta)}(X_n)}+n^{-\tfrac{\delta}{2}}\,m_n^{-(1+\tfrac{\delta}{2})}\,
\E\,p_n^{-(1+\tfrac{\delta}{2})}(X_n)\right).
\end{align}
By virtue of the Schwartz inequality we obtain that
\begin{align*}\label{sch}
n^{-\tfrac{\delta}{2}}\,m_n^{-(1+\tfrac{\delta}{2})}\,
\E\,{p_n^{-(1+\tfrac{\delta}{2})}(X_n)} & =n^{-\tfrac{\delta}{2}}\,m_n^{-(1+\tfrac{\delta}{2})}\,
\E\,{p^{-\frac{1}{2}}_n(X_n)}\, {p_n^{-\tfrac{1+\delta}{2}}(X_n)} \\ & \le n^{-\tfrac{\delta}{2}}\,\sqrt{m_n^{-(1+\delta)}\,
\E\,p_n^{-(1+\delta)}(X_n)}\to 0
\end{align*} in view of  \eqref{novnd}.
Therefore, it only suffices to show that the first term  in the last expression  in \eqref{niero} converges to zero. But this follows  due to \eqref{novnd} and \eqref{reff},  since
$$
\tfrac{\E\,{p_n^{-(1+\delta)}(X_n)}}{n^{\delta}m_n^{1+\tfrac{\delta}{2}}}\,=\left(\tfrac{\sqrt{m_n}}{n}\right)^{\delta} \tfrac{\E\,p_n^{-(1+\delta)}(X_n)}{m_n^{1+\delta}}\to 0.
$$
 \end{proof}

 \section{Discussion}
 We will now  illustrate the results of the previous section with some examples as well as put   them in a  broader context of earlier work by others.
 For the sake of completeness,  we  first note
 \begin{remark}\label{rem1}
{\bf The case $\bm\la=\bm 0$.} Consider  $\tfrac{n}{\sqrt{m_n}}\to 0$. Then the last part of  the right hand side of \eqref{statist} converges to zero and we are left with the sum of non-negative random variables which satisfies $$\tfrac{2\sqrt{m_n}}{n}\,\sum_{k=1}^n\,A_{n,k}\stackrel{\P}\to 0.$$ To see the above,   it  suffices to consider the convergence of the first moments. To this end  note that
$$
\tfrac{2\sqrt{m_n}}{n}\,\sum_{k=1}^n\,\E\,A_{n,k}=\tfrac{2\sqrt{m_n}}{n}\,\sum_{k=1}^n\,\tfrac{k-1}{m_n}=\tfrac{n-1}{\sqrt{m_n}}\to 0.
$$
\end{remark}
The simple illustration of   Theorem~\ref{thb} is  as follows.
 \begin{example}\label{ex}
 Let $\alpha\in [0,1)$ and set   $p_n(i)=(C_\alpha i^{\alpha})^{-1}$ for $i=1,\ldots,m_n$.   Here  $C_\alpha=\sum_{i=1}^{m_n} i^{-\alpha}\simeq m_n^{1-\alpha}/(1-\alpha)$  in view of  the  general formula 
 \begin{equation}\label{pwr}
 \sum_{i=1}^{m_n} i^{\beta} \simeq m_n^{\beta+1}/(\beta+1)\quad \text{for}\quad   \beta>-1.
 \end{equation}
   Note that for $0<\alpha<1$  the condition   (C) is equivalent to
 \begin{equation}\label{c_cond}
 n/m_n\to \infty
 \end{equation} and implies   \eqref{reff}. Applying \eqref{pwr} again we see that for any $\delta>0$
 $$ \tfrac{\E\,p_n^{-(1+\delta)}(X_n)}{m_n^{1+\delta}}=\tfrac{C_\alpha^\delta\sum_{i=1}^{m_n}   i^{\alpha\delta} }{m_n^{1+\delta}}\simeq \tfrac{m_n^{(1-\alpha)\delta} m_n^{1+\alpha\delta}}{(1-\alpha)^\delta(1+\alpha\delta) m_n^{1+\delta}}=(1-\alpha)^{-\delta}(1+\alpha\delta)^{-1}<\infty$$  and therefore  \eqref{novnd}  is also satisfied.  Hence, the conclusion of Theorem~\ref{thb} holds true under \eqref{c_cond} for $0<\alpha<1$.   \end{example}

 Note that in the above example the assumption \eqref{rez} of Theorem~\ref{tha} cannot be satisfied for $0<\alpha<1$ (see \eqref{c_cond}) but can hold  for  $\alpha=0$, that is, when the distribution is uniform.  We remark that in our present setting such distribution is  of interest, for instance,  when testing for signal-noise threshold in data with large number of support points  \citep{Pietrzak2016}. Combining  the results of  Theorems~\ref{tha} and \ref{thb} and Remark~\ref{rem1} one obtains    the following.
\begin{cor}[\bf Asymptotics of $\bm \chi^2_n$ for uniform distribution]\label{cor} Assume that $
p_n(i)=m^{-1}_n$ for $i=1,2,\ldots,m_n$ and $n=1,2,\ldots $  as well as $$n/\sqrt{m_n}\to \la.$$Then

$$\tfrac{\chi_n^2-m_n}{\sqrt{2m_n}}\stackrel{d}{\to}\begin{cases} 0 &\text{when $\la=0$}, \\
      \tfrac{\sqrt{2}}{\lambda}Z-\tfrac{\lambda}{\sqrt{2}},\quad Z\sim\mathrm{Pois} \left(\tfrac{\lambda^2}{2}\right) & \text{when $\la\in(0,\infty)$ }, \\
      N\sim \mathrm{Norm}(0,1)& \text{when $\la=\infty$}.
\end{cases} $$ \qed
\end{cor}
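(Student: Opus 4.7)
The plan is to obtain the corollary as a direct consequence of Theorems~\ref{tha}, \ref{thb} and Remark~\ref{rem1}, by verifying that in the uniform case all the required hypotheses reduce to the single assumption $n/\sqrt{m_n}\to\lambda$. The crucial simplification is that when $p_n(i)=m_n^{-1}$, the random variable $p_n^{-1}(X_n)$ is almost surely equal to the constant $m_n$, so every moment-type condition involving $p_n^{-1}(X_n)$ becomes a triviality.

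First I would check condition (C): since $\var\,p_n^{-1}(X_n)=\var(m_n)=0$, it holds with equality for every $n$, irrespective of the relation between $n$ and $m_n$. Next, for the case $\lambda\in(0,\infty)$, assumption \eqref{rez} is exactly what is being assumed, so Theorem~\ref{tha} applies and gives the Poisson limit $\tfrac{\sqrt{2}}{\lambda}Z-\tfrac{\lambda}{\sqrt{2}}$ with $Z\sim\mathrm{Pois}(\lambda^2/2)$.

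For the case $\lambda=\infty$, I need to verify \eqref{novnd} and \eqref{reff} in Theorem~\ref{thb}. Condition \eqref{reff} is immediate from $n/\sqrt{m_n}\to\infty$. For \eqref{novnd}, I pick any $\delta>0$ (say $\delta=1$) and compute
\[
m_n^{-(1+\delta)}\,\E\,p_n^{-(1+\delta)}(X_n)=m_n^{-(1+\delta)}\cdot m_n^{1+\delta}=1,
\]
uniformly in $n$, so \eqref{novnd} holds and Theorem~\ref{thb} delivers the Gaussian limit.

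Finally, for $\lambda=0$, I invoke Remark~\ref{rem1}, which under (C) (established above) and $n/\sqrt{m_n}\to 0$ shows that $\tfrac{2\sqrt{m_n}}{n}\sum_{k=1}^n A_{n,k}\stackrel{\P}\to 0$; combined with the deterministic term $-(n-1)/\sqrt{2m_n}\to 0$ in the decomposition \eqref{statist} and the fact that (C) forces the $S_n$-contribution to vanish, this yields $(\chi_n^2-m_n)/\sqrt{2m_n}\stackrel{\P}\to 0$. There is no genuine obstacle here; the only point that deserves a sentence of care is gluing the three regimes into a single statement, since in each case one must note that both the $S_n$-part (handled by (C)) and the $U_n$-part contribute as advertised, and that the normalization chosen in the corollary matches the normalizations used in \eqref{limiP} and \eqref{limi}.
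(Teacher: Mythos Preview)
Your proposal is correct and matches the paper's approach exactly: the paper states the corollary as an immediate consequence of Theorems~\ref{tha}, \ref{thb} and Remark~\ref{rem1} without further argument, and you have simply supplied the (trivial) verifications of (C), \eqref{rez}, \eqref{novnd}, and \eqref{reff} in the uniform case that the paper leaves to the reader. Your observation that $p_n^{-1}(X_n)\equiv m_n$ is constant is precisely the reason all moment conditions collapse, and nothing more is needed.
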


We note that the asymptotic distribution of  ${\chi}_n^2$ when both $n$ and $m_n$ tend to infinity has  been considered by several authors, typically in the context of asymptotics of families of goodness-of-fit statistics related to different  divergence distances. Some of these results considered also the asymptotic behavior of such statistics not only under the null hypothesis (as we did here)  but also under simple alternatives and hence are, in that sense, more general. However, when applied to the chi-square statistic under the null hypothesis they appear to be special cases of our theorems  in Section~2.  We  briefly review below some  of the most relevant results.

 \cite{Tu54,Tu56} proved asymptotic normality of ${\chi}^2_n$ under the assumption $\min_{1\le i\le m_n}\,np_n(i)\to \infty$ which in the case of the uniform distribution is equivalent to $n/m_n\to\infty$, a condition obviously stronger than $n/\sqrt{m_n}\to\infty$ we use (see Corollary~\ref{cor}).

 \cite{steck1957limit} generalized these results on normal asymptotics assuming among other conditions that $\inf_n\,n/m_n>0$ which again is stronger than $n/\sqrt{m_n}\to\infty$.  He also  obtained the Poissonian and degenerate limit in the case of uniform distribution, in  agreement with the first two cases in our Corollary~\ref{cor}.
The main result of \cite{holst1972asymptotic} for the chi-square statistic gives normal asymptotics under the regime $n/m_n\to\lambda\in(0,\infty)$ and $\max_{1\le j\le n}\,p_n(j)<\beta/n$ which also is stronger than our assumptions.  In  the uniform case  under this regime the result was  proved earlier in \cite{harris1971distribution}.
The main result of \cite{morris1975central} for the chi-square statistics gives asymptotic normality under $n\min_{1\le j\le n}\,p_n(j)>\eps>0$ for all $n\ge 1$, $\max_{1\le j\le n}\,p_n(j)\,\to 0$ and the "uniform asymptotically negligible" condition of the form ${\max_{1\le i\le m_n}\,\sigma_n^2(i)}/{s_n^2}\to 0$, where $\sigma^2_n(i)=2+\tfrac{(1-m_np_n(i))^2}{np_n(i)}$, $i=1,\ldots,m_n$, and $s_n^2=\sum_{i=1}^{m_n}\,\sigma_n^2(i)$. In the case of the uniform distribution it gives asymptotic normality of ${\chi}^2_n$ under the condition $n/m_n>\eps>0$, the result apparently weaker than the third part of Corollary 3.2.

Following  the paper  of \cite{cressie1984multinomial} introducing the family of power divergence statistics (of which the chi-square statistic is a member), much effort was directed at proving asymptotic normality for wider families of divergence distances as well as for more than one multinomial independent sample, see e.g. \cite{menendez1998asymptotic,parper2002} (in both papers the authors  considered the  regime $n/m_n\to \lambda\in(0,\infty)$) and \cite{inglot1991asymptotics}, \cite{morales2003asymptotic} (in  both papers the authors considered the regime $m_n^{1+\beta}\log^2(n)/n\to 0$ and $m_n^{\beta}\min_{1\le j\le n}\,p_n(j)>c>0$ for some $\beta\ge 1$) or \cite{Pietrzak2016} (with the regime $n/m_n\to\infty$). Note that for the  asymptotic normality results all these regimes are again more stringent than what we  consider here.

Finally, for completeness, we briefly address one of the scenarios  when condition (C) does not hold.
\begin{remark}
Note that  if $\tfrac{m_nn}{\var\,p_n^{-1}(X_n)}\to 0$ then the asymptotic behavior of standardized $\chi^2_n$ is the same as that of $Z_n=\sum_{k=1}^n\,Y_{n,k}$,  where $$Y_{n,k}=\tfrac{p_n^{-1}(X_{n,k})-m_n}{\sqrt{n\var\,p_n^{-1}(X_n)}},\quad k=1,\ldots,n.$$ Since for any fixed $n\ge 1$ random variables $Y_{n,k}$, $k=1,\ldots,n$, are iid (zero mean) and $\var\,Y_{n,k}=n^{-1}$ it follows that $\{Y_{n,k},\,k=1,\ldots,n\}_{n\ge 1}$ is an infinitesimal array. Therefore classical CLT  for row-wise iid triangular arrays  \citep[cf., e.g.,][chapter 1]{Shao03} applies.  Note also that the remaining case when  $\tfrac{m_nn}{\var\,p_n^{-1}(X_n)}\to \la\in(0,\infty)$ appears more complicated and  requires   a different approach.

\end{remark}


\section*{Acknowledgements} The research was conducted when the second author was visiting The Mathematical Biosciences Institute at OSU. Both authors thank the Institute for its logistical support and funding through US NSF grant DMS-1440386. The research was also partially funded by  US NIH grant R01CA-152158 and US NSF grant DMS-1318886. The authors  wish to gratefully acknowledge helpful comments made by  the referee and the associate editor on the early version of the manuscript.
\appendix
\newtheorem{athm}{Theorem}[section]

\renewcommand{\theathm}{A.\arabic{athm}}

\section{Limit Theorems}
Below, for convenience of the readers,  we recall some results which are used in the proofs.
The first one  is found in \cite{beska1982limit} and the second one is a version of the martingale CLT \citep[see, e.g.,][]{hh80}.
\begin{athm}[\bf Poissonian conditional limit theorem]\label{BKS}
Let $\{Z_{n,k},\ k=1,\ldots,n;\,n\ge 1\}$ be a double sequence of non-negative random variables adapted to a row-wise increasing double sequence of $\sigma$-fields $\{\mathcal{G}_{n,k-1},\,k=1,\ldots,n;\,n\ge 1\}$. If for $n\to\infty$
\bel{BKS1}
\max_{1\le k\le n}\,\E(Z_{n,k}|\mathcal{G}_{n,k-1})\stackrel{\P}{\to}0,
\ee
\bel{BKS2}
\sum_{k=1}^n\,\E(Z_{n,k}|\mathcal{G}_{n,k-1})\stackrel{\P}{\to} \eta>0,
\ee
and for any $\eps>0$
\bel{BKS3}
\sum_{k=1}^n\,\E(Z_{n,k}I(|Z_{n,k}-1|>\eps)|\mathcal{G}_{n,k-1})\stackrel{\P}{\to} 0,
\ee
then
$\sum_{k=1}^n\,Z_{n,k}\stackrel{d}{\to}Z$, where $Z\sim \mathrm{Pois}(\eta)$ is a Poisson  random variable.
\end{athm}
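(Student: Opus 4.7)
My plan is to prove Theorem~\ref{BKS} by reducing to a Poisson limit theorem for $\{0,1\}$-valued conditional martingale arrays. The underlying intuition is that conditions \eqref{BKS1}--\eqref{BKS3} together force each $Z_{n,k}$ to take values close to $0$ (most of the time) or close to $1$; the count of near-$1$ indices will then be approximately Poisson in the limit.

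Fix $\eps\in(0,\tfrac12)$ and split $Z_{n,k}=Z'_{n,k}+Z''_{n,k}$ with $Z''_{n,k}:=Z_{n,k}\,I(|Z_{n,k}-1|>\eps)$. By \eqref{BKS3}, $\sum_k \E(Z''_{n,k}|\mathcal{G}_{n,k-1})\stackrel{\P}{\to}0$, so $\sum_k Z''_{n,k}\stackrel{\P}{\to}0$. What remains is the near-$1$ part $S'_n:=\sum_k Z'_{n,k}$, each summand of which lies in $\{0\}\cup[1-\eps,1+\eps]$. Setting $I_{n,k}:=I(|Z_{n,k}-1|\le\eps)$ and $N_n:=\sum_k I_{n,k}$, one checks $|Z'_{n,k}-I_{n,k}|\le\eps\,I_{n,k}$, which upon summation yields the approximation $|S_n-N_n|\le\eps\,N_n+o_P(1)$, where $S_n:=\sum_k Z_{n,k}$.

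For the conditional success probabilities $q_{n,k}:=\P(I_{n,k}=1|\mathcal{G}_{n,k-1})$, the sandwich $(1-\eps)I_{n,k}\le Z_{n,k}I_{n,k}\le(1+\eps)I_{n,k}$ gives
$$(1+\eps)^{-1}\,\E(Z_{n,k}I_{n,k}|\mathcal{G}_{n,k-1})\le q_{n,k}\le(1-\eps)^{-1}\,\E(Z_{n,k}I_{n,k}|\mathcal{G}_{n,k-1}),$$
which, combined with \eqref{BKS1}--\eqref{BKS3}, delivers $\max_k q_{n,k}\stackrel{\P}{\to}0$ and $\sum_k q_{n,k}\stackrel{\P}{\to}\eta^*$ for some $\eta^*=\eta^*(\eps)$ with $|\eta^*-\eta|\le\eta\,\eps/(1-\eps)$. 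A classical Poisson limit theorem for martingale-difference Bernoulli arrays then yields $N_n\stackrel{d}{\to}\mathrm{Pois}(\eta^*)$. Passing $\eps\to 0$ along a sufficiently slowly vanishing sequence $\eps_n$, one obtains simultaneously $|S_n-N_n|\stackrel{\P}{\to}0$ and $N_n\stackrel{d}{\to}\mathrm{Pois}(\eta)$, so $\sum_k Z_{n,k}\stackrel{d}{\to}\mathrm{Pois}(\eta)$.

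The main technical obstacle is the Bernoulli case itself, because the conditional probabilities $q_{n,k}$ are random and only $\mathcal{G}_{n,k-1}$-measurable, so the characteristic function of $N_n$ does not factor as an expectation of a product of deterministic factors. The standard resolution compares $\E\,e^{itN_n}$ with $\E\prod_k(1+q_{n,k}(e^{it}-1))$ by introducing the complex mean-one martingale $M_n(t):=\prod_k e^{itI_{n,k}}/(1+q_{n,k}(e^{it}-1))$, and checking $\E|M_n(t)-1|^2\to 0$ using $\max_k q_{n,k}\to 0$ and boundedness of $\sum_k q_{n,k}$; the product $\prod_k(1+q_{n,k}(e^{it}-1))$ in turn converges in probability to $\exp(\eta^*(e^{it}-1))$ by Taylor expansion of $\log(1+\cdot)$ together with the estimate $\sum q_{n,k}^2\le\max_k q_{n,k}\cdot\sum q_{n,k}\to 0$.
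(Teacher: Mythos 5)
First, a point of reference: the paper itself offers no proof of Theorem~\ref{BKS} --- it is recalled in the appendix as a known result and attributed to \cite{beska1982limit} --- so there is no in-paper argument to measure yours against, and your attempt must stand on its own. Your overall architecture is a legitimate and essentially standard route to such conditional Poisson limit theorems: discard the part of $Z_{n,k}$ far from $\{0,1\}$ using \eqref{BKS3}, replace the remainder by the Bernoulli array $I_{n,k}=I(|Z_{n,k}-1|\le\eps)$, prove a conditional Poisson limit for $N_n=\sum_k I_{n,k}$ via a characteristic-function martingale, and let $\eps\to0$. Two intermediate claims are overstatements that are easy to repair: the sandwich on $q_{n,k}$ yields only $(1+\eps)^{-1}(\eta+o_{\P}(1))\le\sum_k q_{n,k}\le(1-\eps)^{-1}(\eta+o_{\P}(1))$, not convergence in probability to a single $\eta^*(\eps)$, so you should bound $\limsup_n|\E e^{itS_n}-e^{\eta(e^{it}-1)}|$ by a quantity vanishing with $\eps$ rather than assert a distributional limit for $N_n$ at fixed $\eps$; and the implication from $\sum_k\E(Z''_{n,k}|\mathcal{G}_{n,k-1})\stackrel{\P}{\to}0$ to $\sum_k Z''_{n,k}\stackrel{\P}{\to}0$ requires the Lenglart/Freedman inequality for nonnegative adapted arrays, which you use silently and should state or cite.

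The genuine gap is the key estimate $\E|M_n(t)-1|^2\to0$: it is false, already in the baseline case of independent Bernoulli variables with deterministic $q_{n,k}=p_k$. Since $|e^{itI_{n,k}}|=1$, one has $|M_n(t)|^2=\prod_k|1+q_{n,k}(e^{it}-1)|^{-2}=\prod_k\left(1-2q_{n,k}(1-q_{n,k})(1-\cos t)\right)^{-1}\ge\exp\left(2(1-\cos t)\sum_k q_{n,k}(1-q_{n,k})\right)$, and the exponent is close to $2(1-\cos t)\eta>0$ for $t\not\equiv 0$, not close to $0$; as $\E M_n(t)=1$ gives $\E|M_n(t)-1|^2=\E|M_n(t)|^2-1$, this quantity stays bounded away from zero. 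The step can be salvaged without $L^2$-convergence to $1$: write $\E e^{itN_n}=\E[M_n(t)(P_n-P)]+P$, where $P_n=\prod_k(1+q_{n,k}(e^{it}-1))$ and $P$ is its in-probability limit, and bound $|\E[M_n(t)(P_n-P)]|\le(\E|M_n(t)|^2)^{1/2}(\E|P_n-P|^2)^{1/2}$, which tends to $0$ because $|P_n|,|P|\le1$ and $P_n\to P$ in probability, provided $\E|M_n(t)|^2$ stays \emph{bounded}. Even that boundedness is not automatic from your hypotheses: $\max_k q_{n,k}\to0$ and the control of $\sum_k q_{n,k}$ hold only in probability, and $|1+q(e^{it}-1)|$ can be arbitrarily small near $q=\tfrac12$, $t=\pi$, so on the exceptional event the product can blow up. You need to stop the martingale at $\tau_n=\min\{j:\ q_{n,j+1}>\delta\ \mbox{or}\ \sum_{k\le j}q_{n,k}>A\}$, run the second-moment computation for the stopped product, and note that $\P(\tau_n\le n)\to0$. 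With these corrections the proof goes through.
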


\begin{athm}[\bf Lyapunov-type martingale CLT]\label{mclt}
Let $\{(Z_{n,k},\mathcal{F}_{n,k})\,k=1,\ldots,n;\,n\ge 1\}$ be a double sequence of martingale differences.
If
\bel{convar}
\sum_{k=1}^n\,\E\left(Y_{n,k}^2|\mathcal{F}_{n,k-1}\right)\stackrel{\P}{\to}1
\ee
and for some $\delta>0$
\bel{4m}
\sum_{k=1}^n\,\E\,Y_{n,k}^{2+\delta}\to 0.
\ee
then
$\sum_{k=1}^n\,Z_{n,k}\stackrel{d}{\to}N$, where $N\sim\mathrm{Norm}(0,1)$ is  a standard normal  random variable.
\end{athm}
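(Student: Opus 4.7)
The plan is to deduce this Lyapunov-type statement from the more standard conditional-Lindeberg version of the martingale CLT (as in Hall and Heyde), whose hypotheses are the variance convergence \eqref{convar} together with the conditional Lindeberg condition
\[
\sum_{k=1}^n\,\E\bigl(Y_{n,k}^2\,I(|Y_{n,k}|>\eps)\,\big|\,\F_{n,k-1}\bigr) \stackrel{\P}{\to} 0\qquad\text{for every }\eps>0.
\]
Since \eqref{convar} is already part of our assumptions, the entire task reduces to showing that the Lyapunov moment bound \eqref{4m} forces this conditional Lindeberg condition; the final conclusion then follows by a direct black-box invocation of the Hall--Heyde theorem.

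The key input is the elementary pointwise inequality
\[
Y_{n,k}^2\,I(|Y_{n,k}|>\eps)\le \eps^{-\delta}\,|Y_{n,k}|^{2+\delta},
\]
valid for every $\eps>0$ and every $k$. I would take conditional expectation with respect to $\F_{n,k-1}$, sum over $k$, and then take one further (unconditional) expectation. By the tower property and \eqref{4m} this yields
\[
\E\Bigl[\sum_{k=1}^n\,\E\bigl(Y_{n,k}^2\,I(|Y_{n,k}|>\eps)\,\big|\,\F_{n,k-1}\bigr)\Bigr]\le \eps^{-\delta}\sum_{k=1}^n\E|Y_{n,k}|^{2+\delta}\to 0,
\]
so the (nonnegative) Lindeberg sum converges to zero in $L^1$, and by Markov's inequality together with its nonnegativity, also in probability. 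With both hypotheses of the conditional-Lindeberg martingale CLT in place, the desired weak convergence $\sum_{k=1}^n Z_{n,k}\stackrel{d}{\to} N$ follows immediately.

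The main obstacle here is essentially conceptual rather than technical: the entire reduction is a routine Chebyshev--Markov-type argument, and the only subtle point is the passage from $L^1$-convergence of the Lindeberg sum to convergence in probability, which is automatic by its nonnegativity. The deeper work — characteristic-function manipulations, truncation of martingale differences, and the equivalence between conditional Lindeberg and conditional negligibility — is entirely absorbed into the Hall--Heyde theorem that we invoke, so we need not reproduce it.
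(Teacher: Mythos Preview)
Your reduction is correct, but there is nothing to compare it against: the paper does not prove this theorem. Theorem~\ref{mclt} is stated in the appendix purely for the reader's convenience, with a citation to Hall and Heyde (1980); no proof is given or intended. Your argument --- bounding the truncated second moment by $\eps^{-\delta}|Y_{n,k}|^{2+\delta}$, passing to $L^1$-convergence of the conditional Lindeberg sum via the tower property, and then invoking the Lindeberg-type martingale CLT --- is exactly the standard derivation of the Lyapunov form from the Lindeberg form, and is presumably what the authors had in mind when they called this ``the Lyapounov version of the CLT for martingale differences.''
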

\section{Moment Inequalities} The following moment inequalities  are used in Section~2.
\subsection*{\bf Rosenthal inequality} \cite{rosenthal1970subspaces}.  If $X_1,\ldots,X_n$ are independent and centered random variables such that $\E|X_i|^r<\infty$, $i=1,\ldots,n$ and $r>2$ then
\begin{align}\label{ros}
\E\left|\sum_{i=1}^n\,X_i\right|^r &\le C_r\max\left\{\sum_{i=1}^n\,\E|X_i|^r,\;\left(\sum_{i=1}^n\,\E\,X_i^2\right)^{\tfrac{r}{2}}\right\} \nonumber\\
&\le C_r\left(\sum_{i=1}^n\,\E|X_i|^r+\left(\sum_{i=1}^n\,\E\,X_i^2\right)^{\tfrac{r}{2}}\right).
\end{align}
\subsection*{\bf MZ-BE inequality} \cite{marcinkiewicz1937quelques} for $r\ge 2$,  \cite{von1965inequalities} for $1\le r\le 2$. If $X_1,\ldots,X_n$ are independent and centered random variables such that $\E|X_i|^r<\infty$, $i=1,\ldots,n$ then for $r>1$
\bel{bur1}
\E\left|\sum_{i=1}^n\,X_i\right|^r\le C_r\,n^{r_\ast}\,\sum_{i=1}^n\,\E|X_i|^r,
\ee
where $r_\ast=0\vee \left(\tfrac{r}{2}-1\right)$.


\end{document}